\begin{document} 
\newtheorem{prop}{Proposition}[section]
\newtheorem{Def}{Definition}[section] \newtheorem{theorem}{Theorem}[section]
\newtheorem{lemma}{Lemma}[section] \newtheorem{Cor}{Corollary}[section]

\title[Chern-Simons-Higgs in temporal gauge]{\bf Low regularity local well-posedness for the Chern-Simons-Higgs system in temporal gauge}
\author[Hartmut Pecher]{
{\bf Hartmut Pecher}\\
Fachbereich Mathematik und Naturwissenschaften\\
Bergische Universit\"at Wuppertal\\
Gau{\ss}str.  20\\
42097 Wuppertal\\
Germany\\
e-mail {\tt pecher@math.uni-wuppertal.de}}
\date{}

\begin{abstract}
The Cauchy problem for the Chern-Simons-Higgs system in the (2+1)-dimensional Minkowski space in temporal gauge is locally well-posed for low regularity initial data improving a result of Huh. The proof uses the bilinear space-time estimates in wave-Sobolev spaces by d'Ancona, Foschi and Selberg and takes advantage of a null condition.
\end{abstract}
\maketitle
\renewcommand{\thefootnote}{\fnsymbol{footnote}}
\footnotetext{\hspace{-1.5em}{\it 2000 Mathematics Subject Classification:} 
35Q40, 35L70 \\
{\it Key words and phrases:} Chern-Simons-Higgs,  
local well-posedness, temporal gauge}
\normalsize 
\setcounter{section}{0}
\section{Introduction and main results}
\noindent Consider the Chern-Simons-Higgs system in the Minkowski space 
${\mathbb R}^{1+2} = {\mathbb R}_t \times {\mathbb R}_x^2$ with metric $g_{\mu \nu} = diag(1,-1,-1)$ :
\begin{align}
\label{*1}
 F_{\mu \nu} & =  2 \epsilon^{\mu \nu \rho} Im(\overline{\phi} D^{\rho} \phi) \\
\label{*2}
D_{\mu} D^{\mu} \phi & = - \phi V'(|\phi|^2) \, ,
\end{align}
with initial data
\begin{equation}
\label{*3}
A_{\nu}(0) = a_{\nu} \, , \, \phi(0) = \phi_0 \, , \, (\partial_0 \phi)(0) = \phi_1 \, , 
\end{equation}
where we use the convention that repeated upper and lower indices are summed, Greek indices run over 0,1,2 and Latin indices over 1,2. Here 
\begin{align*}
D^{\mu}  & := \partial_{\mu} - iA_{\mu} \\
 F_{\mu \nu} & := \partial_{\mu} A_{\nu} - \partial_{\nu} A_{\mu} \\
\end{align*}
Here $F_{\mu \nu} : {\mathbb R}^{1+2} \to {\mathbb R}$ denotes the curvature, $\phi : {\mathbb R}^{1+2} \to {\mathbb C}$ a scalar field and $A_{\nu} : {\mathbb R}^{1+2} \to {\mathbb R}$ the gauge potentials. We use the notation $\partial_{\mu} = \frac{\partial}{\partial x_{\mu}}$, where we write $(x^0,x^1,...,x^n) = (t,x^1,...,x^n)$ and also $\partial_0 = \partial_t$ and $\nabla = (\partial_1,\partial_2)$. $\epsilon^{\mu \nu \rho}$ is the totally skew-symmetric tensor with $\epsilon^{012} = 1$, and the Higgs potential $V$ is assumed to fulfill
$V \in C^{\infty}({\mathbb R}_+,{\mathbb R}) $ , $V(0)=0$ and all derivatives of $V$ have polynomial growth.

This model was proposed by Hong, Kim and Pac \cite{HKP} and Jackiw and Weinberg \cite{JW} in the study of vortex solutions in the abelian Chern-Simons theory.

The equations are invariant under the gauge transformations
$$ A_{\mu} \rightarrow A'_{\mu} = A_{\mu} + \partial_{\mu} \chi \, , \, \phi \rightarrow \phi' = e^{i\chi} \phi \, , \, D_{\mu} \rightarrow D'_{\mu} = \partial_{\mu}-iA'_{\mu} \, . $$
The most common gauges are the Coulomb gauge $\partial^j A_j =0$ , the Lorenz gauge $\partial^{\mu} A_{\mu} = 0$ and the temporal gauge $A_0 = 0$. In this paper we exclusively study the temporal gauge for low regularity data.

Global well-posedness in the Coulomb gauge was proven by Chae and Choe \cite{CC} for data $a_{\mu} \in H^a$ , $\phi_0 \in H^b$ , $\phi_1 \in H^{b-1}$ where $(a,b)=(l,l+1)$ with $l \ge 1$ , satisfying a compatibility condition and a class of Higgs potentials. Huh \cite{H} showed local well-posedness in the Coulomb gauge for $(a,b)=(\epsilon,1+\epsilon)$ , in the Lorenz gauge for $(a,b) = (\frac{3}{4}+\epsilon,\frac{9}{8}+\epsilon)$ or $(a,b)=(\frac{1}{2},\frac{3}{2})$ , and in the temporal gauge for $(a,b)=(l,l)$ with $l \ge \frac{3}{2}$ . He also showed global well-posedness in the temporal gauge for $l \ge 2$ . The local well-posedness result in the Lorenz gauge was improved to $(a,b)=(l,l+1)$ and $l > \frac{1}{4}$ by Bournaveas \cite{B} and by Yuan \cite{Y}.  Also in Lorenz gauge the very important global well-posedness result in energy space, where $a_{\mu} \in \dot{H}^{\frac{1}{2}}$ , $\phi_0 \in H^1$ , $\phi_1 \in L^2$ , was proven by Selberg and Tesfahun \cite{ST} under suitable assumptions on the potential V, and even unconditional well-posedness could be proven by Selberg and Oliveira da Silva \cite{SO}. In \cite{ST} the regularity assumptions on the data could also be lowered down in Lorenz gauge to $(a,b)=(l,l+1)$ and $l > \frac{3}{8}$ . This latter result was improved to $l > \frac{1}{4}$ by Huh and Oh \cite{HO}. Global well-posedness in energy space and local well-posedness for $a_{\mu} \in \dot{H}^{\frac{1}{2}}$ , $\phi_0 \in H^{l+\frac{1}{2}}$ , $\phi_1 \in H^{l-\frac{1}{2}}$ for $\frac{1}{2} \ge l > \frac{1}{4}$ in Coulomb gauge was very recently obtained by Oh \cite{O}. For all these results up to the paper by Chae and Choe \cite{CC} and Oh \cite{O} it was crucial to make use  of a null condition in the nonlinearity of the system.

An earlier result in the temporal gauge was also given by Tao \cite{T} for the Yang-Mills equations.

In this paper we consider exclusively the temporal gauge and obtain local well-posedness for data $a_j \in H^{s+\frac{3}{4}}$ , $\phi_0 \in H^{s+1}$ , $\phi_1 \in H^{s}$ under the conditon $s > \frac{1}{4}$ and the compatibilty assumption $\partial_1 a_2 - \partial_2 a_1 = 2 Im(\overline{\phi}_0 \phi_1)$.  We use a contraction argument in $X^{s,b}$ - type spaces adapted to the phase functions $\tau \pm |\xi|$ on one hand and to the phase function $\tau$ on the other hand. We also take advantage of a null condition which appears in the nonlinearity. Most of the crucial arguments follow from the bilinear estimates in wave-Sobolev spaces established by d'Ancona, Foschi and Selberg \cite{AFS}.

We denote the Fourier transform with respect to space and time by $\,\widehat{}$ . The operator
$|\nabla|^{\alpha}$ is defined by $(|\nabla|^{\alpha} f)(\xi) = |\xi|^{\alpha} ({\mathcal F}f)(\xi)$ and similarly $ \langle \nabla \rangle^{\alpha}$. The inhomogeneous and homogeneous Sobolev spaces are denoted by $H^{s,p}$ and $\dot{H}^{s,p}$, respectively. For $p=2$ we simply denote them by $H^s$ and $\dot{H}^s$. We repeatedly use the Sobolev embeddings $\dot{H}^{s,p} \subset L^q$ for  $1<p\le q < \infty$ and $\frac{1}{q} = \frac{1}{p}-\frac{s}{2}$, and also $\dot{H}^{1+} \cap \dot{H}^{1-} \subset L^{\infty}$ in two space dimensions. \\
$a+ := a + \epsilon$ for a sufficiently small $\epsilon >0$ , so that $a<a+<a++$ , and similarly $a--<a-<a$ , and $\langle \cdot \rangle := (1+|\cdot|^2)^{\frac{1}{2}}$ .

We now formulate our main result and begin by defining the standard spaces $X^{s,b}_{\pm}$ of Bourgain-Klainerman-Machedon type belonging to the half waves as the completion of the Schwarz space  $\mathcal{S}({\mathbb R}^3)$ with respect to the norm
$$ \|u\|_{X^{s,b}_{\pm}} = \| \langle \xi \rangle^s \langle  \tau \pm |\xi| \rangle^b \widehat{u}(\tau,\xi) \|_{L^2_{\tau \xi}} \, . $$ 
Similarly we define the wave-Sobolev spaces $X^{s,b}_{|\tau|=|\xi|}$ with norm
$$ \|u\|_{X^{s,b}_{|\tau|=|\xi|}} = \| \langle \xi \rangle^s \langle  |\tau| - |\xi| \rangle^b \widehat{u}(\tau,\xi) \|_{L^2_{\tau \xi}}  $$ and also $X^{s,b}_{\tau =0}$ with norm 
$$\|u\|_{X^{s,b}_{\tau=0}} = \| \langle \xi \rangle^s \langle  \tau  \rangle^b \widehat{u}(\tau,\xi) \|_{L^2_{\tau \xi}} \, .$$
We also define $X^{s,b}_{\pm}[0,T]$ as the space of the restrictions of functions in $X^{s,b}_{\pm}$ to $[0,T] \times \mathbb{R}^2$ and similarly $X^{s,b}_{|\tau| = |\xi|}[0,T]$ and $X^{s,b}_{\tau =0}[0,T]$. We frequently use the estimates $\|u\|_{X^{s,b}_{\pm}} \le \|u\|_{X^{s,b}_{|\tau|=|\xi|}}$ for $b \le 0$ and the reverse estimate for $b \ge 0$. 

Our main theorem reads as follows:                                  
\begin{theorem}
\label{Theorem}
Assume $ s > \frac{1}{4}$ and $\epsilon > 0$ sufficiently small. The Chern-Simons-Higgs system (\ref{*1}),(\ref{*2}),(\ref{*3}) in temporal gauge $A_0=0$ with data $\phi_0 \in H^{s+1}({\mathbb R}^2)$ , $ \phi_1 \in H^{s}({\mathbb R}^2),$ $|\nabla|^{\epsilon} a_j \in H^{s+\frac{3}{4}-\epsilon}({\mathbb R}^2) $ satisfying the compatibility condition $\partial_1 a_2 - \partial_2 a_1 = 2 Im(\overline{\phi}_0 \phi_1)$ has a unique local solution $\phi \in C^0([0,T],H^{s+1}({\mathbb R}^2)) \cap C^1([0,T],H^{s}({\mathbb R}^2)),$  $|\nabla|^{\epsilon} A \in C^0([0,T],H^{s+\frac{3}{4}-\epsilon}({\mathbb R}^2))$.  \\
More precisely we have $\phi \in X^{s+1,\frac{1}{2}+}_+[0,T] + X^{s+1,\frac{1}{2}+}_-[0,T] ,$ $A= A^{cf} + A^{df}$ , where $ \nabla A^{cf} \in X^{{s-\frac{1}{4}},\frac{1}{2}+}_{\tau=0} [0,T]$ , $|\nabla|^{\epsilon} A^{cf} \in C^0([0,T],L^2({\mathbb R}^2))$ , $\nabla A^{df} \in X^{{s},\frac{1}{2}+}_{|\tau|=|\xi|}[0,T],$ $|\nabla|^{\epsilon} A^{df} \in C^0([0,T],L^2({\mathbb R}^2))$ and in these spaces uniqueness holds, and moreover  $\nabla A^{cf} \in X^{s-\frac{1}{4},1-}_{\tau =0}[0,T]$. 

\end{theorem}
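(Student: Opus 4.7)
The plan is to solve the system by a contraction-mapping argument on a small time interval $[0,T]$ in the norm
\[
\mathcal{N}(\phi,A) = \|\phi\|_{X^{s+1,\frac{1}{2}+}_+ + X^{s+1,\frac{1}{2}+}_-} + \|\nabla A^{df}\|_{X^{s,\frac{1}{2}+}_{|\tau|=|\xi|}} + \|\nabla A^{cf}\|_{X^{s-\frac{1}{4},\frac{1}{2}+}_{\tau=0}},
\]
where $A = A^{cf} + A^{df}$ is the Hodge decomposition of the spatial potential into curl-free and divergence-free parts. First I would rewrite the system in temporal gauge $A_0=0$: equation (\ref{*2}) becomes a semilinear wave equation $\Box\phi = -2iA^j\partial_j\phi - i(\partial^j A_j)\phi + |A|^2\phi - \phi V'(|\phi|^2)$; equation (\ref{*1}) for $\mu=0$ gives the first-order evolution $\partial_t A_j = -2\epsilon_{jk}\mathrm{Im}(\overline{\phi}D_k\phi)$; and the spatial component yields the constraint $\partial_1 A_2 - \partial_2 A_1 = 2\mathrm{Im}(\overline{\phi}\,\partial_t\phi)$, which is imposed at $t=0$ by the compatibility assumption and propagated via the Bianchi identity.

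The decisive structural step exploits the constraint: since $\mathrm{curl}\,A^{cf}=0$, we recover $A^{df}$ algebraically from $\phi$ through
\[
A^{df} = -2\nabla^{\perp}(-\Delta)^{-1}\mathrm{Im}(\overline{\phi}\,\partial_t\phi),
\]
so that $A^{df}$ inherits wave-Sobolev regularity directly from $\phi$, with a one-derivative smoothing coming from $\nabla(-\Delta)^{-1}$. The orthogonal part $A^{cf}$ satisfies the pure transport equation $\partial_t A^{cf} = P_{cf}(\text{source})$, whose Duhamel representation naturally sits in $X^{\cdot,\cdot}_{\tau=0}$. Writing $A^{df} = \nabla^{\perp}\psi$ immediately identifies the dangerous Higgs term $A^{df}\cdot\nabla\phi$ with the Klainerman null form $Q_{12}(\psi,\phi)$, while the bilinear $\mathrm{Im}(\overline{\phi}\,\partial_t\phi)$ driving $A^{df}$ has analogous null structure after the half-wave decomposition $\phi = \phi_+ + \phi_-$.

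With this reformulation the contraction reduces to a finite list of bilinear and trilinear estimates in wave-Sobolev spaces. After a dyadic frequency and modulation decomposition, most of them follow directly from the product theorem of d'Ancona--Foschi--Selberg in \cite{AFS}; the ones determining $A^{df}$ additionally require the null-form gain that comes from the vanishing of the symbol of $Q_{12}$ on the resonant cone, which effectively recovers the missing half-derivative ruling out a direct product estimate. The residual terms involving $A^{cf}$ are simpler because $\nabla A^{cf}$ is controlled in $X^{s-\frac{1}{4},\frac{1}{2}+}_{\tau=0}$, a genuinely sub-wave norm in which standard Sobolev products suffice.

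The main obstacle will be the endpoint bilinear estimate bounding $|\nabla|^{-1}\mathrm{Im}(\overline{\phi}\,\partial_t\phi)$ in $X^{s,\frac{1}{2}+}_{|\tau|=|\xi|}$ in terms of $\phi\in X^{s+1,\frac{1}{2}+}_{\pm}$; this is precisely the estimate that forces the threshold $s > \frac{1}{4}$ and mirrors the sharp result in the Lorenz gauge due to Huh--Oh \cite{HO} and Yuan \cite{Y}. A secondary difficulty is the trilinear $A\phi\nabla\phi$ term in the Higgs equation, where after substituting the formula for $A^{df}$ one obtains a quartic-in-$\phi$ expression that must be controlled by combining two independent null-form gains. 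Finally, the improvement $\nabla A^{cf}\in X^{s-\frac{1}{4},1-}_{\tau=0}$ is a posteriori: once the fixed point is constructed, reinserting it into the $\partial_t A^{cf}$-equation shows that the source has an extra half unit of modulation regularity on the $\tau = 0$ scale, which is transferred to $A^{cf}$ by trivial $t$-integration.
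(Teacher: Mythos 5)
Your skeleton coincides with the paper's: the same Hodge split $A=A^{cf}+A^{df}$, the same algebraic recovery $A^{df}=2\nabla^{\perp}\Delta^{-1}\mathrm{Im}(\overline{\phi}\,\partial_t\phi)$ from the constraint, the same placement of the transport equation for $A^{cf}$ in $X^{s-\frac14,\frac12+}_{\tau=0}$, the half-wave decomposition $\phi=\phi_++\phi_-$, and closure via the d'Ancona--Foschi--Selberg product theorem. However, you have located the null structure in the wrong places, and this is a genuine gap rather than a stylistic difference. The estimate you dismiss as simpler --- bounding the quadratic source $\mathrm{Im}(\partial_2\overline{\phi}\,\partial_1\phi-\partial_1\overline{\phi}\,\partial_2\phi)$ of the $\partial_t A^{cf}$ equation in $X^{s-\frac14,0}_{\tau=0}$ --- is precisely where ``standard Sobolev products suffice'' fails: putting a product of two factors $\nabla\phi\in H^s$ into $H^{s-\frac14}$ requires $2s-(s-\frac14)\ge 1$, i.e.\ $s\ge\frac34$, by Sobolev, and even the full strength of Theorem \ref{Theorem3} without angular gain needs $s_0+s_1+s_2=s+\frac14>\frac34$, i.e.\ $s>\frac12$. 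The paper reaches $s>\frac14$ only because this source is exactly the antisymmetric null form $Q_{12}(\overline{\phi},\phi)$, estimated through Selberg's angle bound (\ref{angle}) with a case distinction over the dominant modulation (the paper's Claim 1); together with the $A^{cf}\nabla\phi$ estimate (Claim 6) this is what produces the threshold $s>\frac14$. Conversely, the two places where you invoke null structure do not need it, and one of them does not have it: the bound (\ref{g}), $\|\nabla A^{df}\|_{X^{s,\frac12+}_{|\tau|=|\xi|}}\lesssim\|\phi\|_{X^{s+1,\frac12+}_{|\tau|=|\xi|}}\|\partial_t\phi\|_{X^{s,\frac12+}_{|\tau|=|\xi|}}$, is a plain product estimate valid well below $s=\frac14$, and $\mathrm{Im}(\overline{\phi}\,\partial_t\phi)$ --- the temporal component of the current --- acquires no angular vanishing after the half-wave decomposition (to leading order $\partial_t\phi_{\pm}\approx\mp i\langle\nabla\rangle\phi_{\pm}$, so the symbol carries no angle factor); the null structure of the current lives only in the antisymmetrized spatial combination above. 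Likewise $A^{df}\cdot\nabla\phi=Q_{12}(\psi,\phi)$ is algebraically correct but unnecessary: the paper's Claim 5 handles that term by Sobolev embeddings from (\ref{f}) and (\ref{g}) alone. In short, your predicted main obstacle is routine, while the actual obstacle is absent from your plan.

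A secondary but real defect is that your contraction norm $\mathcal{N}$ omits any control of the low frequencies of $A^{cf}$. The sources involve $\Delta^{-1}$, which is not smoothing at low frequency in two dimensions, so $\|\nabla A^{cf}\|_{X^{s-\frac14,\frac12+}_{\tau=0}}$ gives no bound on $A^{cf}$ itself there; this is why the theorem assumes $|\nabla|^{\epsilon}a_j\in H^{s+\frac34-\epsilon}$ rather than $a_j\in H^{s+\frac34}$, and why the paper carries $\||\nabla|^{\epsilon}A^{cf}\|_{L^{\infty}_tL^2_x}$ as a third component of the iteration norm, with low/high frequency splittings in Claims 2, 6 and 8 and the $L^1_tL^2_x$-type bounds of Claim 4 needed to close that component. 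Your a posteriori argument for $\nabla A^{cf}\in X^{s-\frac14,1-}_{\tau=0}$ matches the paper (time integration of the source bounds, Claims 1--3), and your propagation of the constraint is fine; but as written the fixed-point argument does not close without reinstating the missing null-form estimate for the $A^{cf}$ source and the low-frequency norm.
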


\section{Reformulation of the problem}
In the temporal gauge $A_0 =0$ the Chern-Simons-Higgs system (\ref{*1}),(\ref{*2}) is equivalent to the following system
\begin{align}
\label{**1}
&\partial_t A_j   = 2 \epsilon_{ij} Im(\overline{\phi} D^i \phi) \\
\label{**2}
&\partial_t^2 \phi - D^j D_j \phi  = - \phi V'(|\phi|^2) \\
\nonumber
&\Longleftrightarrow \Box \phi  = 2i A^j \partial_j \phi - i \partial_j A^j \phi + A^j A_j \phi - \phi V'(|\phi|^2) \\
\label{1.9}
&\partial_1 A_2 - \partial_2 A_1  = 2 Im(\overline{\phi} \partial_t \phi) \, ,
\end{align}
where $i,j=1,2$ , $\epsilon_{12}=1$ , $\epsilon_{21}=-1$ and $\Box = \partial_t^2 - \partial_1^2 - \partial_2^2$ . \\
We remark that (\ref{1.9}) is fulfilled for any solution of (\ref{**1}),(\ref{**2}), if it holds initially, i.e. , if the following compatibility condition holds, which we assume from now on:
\begin{equation}
\label{1.9init}
\partial_1 A_2(0) - \partial_2 A_1(0) = 2 Im(\overline{\phi}(0) (\partial_t \phi)(0)) \, .
\end{equation}
Indeed, we have by (\ref{**1}) and (\ref{**2}):
$$ \partial_t(\partial_1 A_2 - \partial_2 A_1) = 2 Im(\overline{\phi}(D_1^2 \phi + D_2^2 \phi)) = 2 Im(\overline{\phi} \partial_t^2 \phi) = 2 \partial_t Im(\overline{\phi} \partial_t \phi) \, . $$
Thus we only have to solve (\ref{**1}) and (\ref{**2}), and can assume that (\ref{1.9}) is fulfilled. We make the standard decomposition of $A=(A_1,A_2)$ into its divergence-free part $A^{df}$ and its curl-free part $A^{cf}$, namely $A=A^{df} +A^{cf}$, where
\begin{align*}
A^{df} & = (-\Delta)^{-1}(\partial_1 \partial_2 A_2 - \partial_2^2 A_1,\partial_1 \partial_2 A_1 - \partial_1^2 A_2) \, , \\
A^{cf} & = (\Delta)^{-1}(\partial_1 \partial_2 A_2 + \partial_1^2 A_1, \partial_1 \partial_2 A_1 + \partial_2^2 A_2) \, .
\end{align*}
Let $B$ be defined by $A^{df}_1 = -\partial_2 B$ , $A^{df}_2 = \partial_1 B$ . Then by (\ref{1.9}) and $\partial_1 A^{cf}_2 - \partial_2 A^{cf}_1 =0$ we obtain
$$ \Delta B = \partial_1 A_2^{df} - \partial_2 A_1^{df} = 2 Im(\overline{\phi} \partial_t \phi) \, ,$$
so that
\begin{equation}
\label{***2}
A^{df}_1 = -2 \Delta^{-1} \partial_2 Im(\overline{\phi} \partial_t \phi) \, , \, A^{df}_2 = 2 \Delta^{-1} \partial_1 Im(\overline{\phi} \partial_t \phi) \, .
\end{equation}
Next we calculate $\partial_t A^{cf}$ for solutions $(A,\phi)$ of (\ref{**1}),(\ref{**2}):
\begin{align}
\nonumber
\partial_t A^{cf}_1 &= \Delta^{-1} \partial_1(\partial_2 \partial_t A_2 + \partial_1 \partial_t A_1) \\
\nonumber
& = 2 \Delta^{-1} \partial_1(\partial_2 Im(\overline{\phi} D_1 \phi) - \partial_1 Im(\overline{\phi} D_2 \phi) \\
\nonumber
& = 2 \Delta^{-1} \partial_1 Im(\partial_2 \overline{\phi} D_1 \phi + \overline{\phi} \partial_2 D_1 \phi - \partial_1 \overline{\phi} D_2 \phi - \overline{\phi} \partial_1 D_2 \phi) \\
\nonumber
& = 2 \Delta^{-1} \partial_1 Im [\partial_2 \overline{\phi}\partial_1 \phi - \partial_1 \overline{\phi} \partial_2 \phi -i A_1 \partial_2 \overline{\phi} \phi + i A_2 \partial_1 \overline{\phi} \phi \\
\nonumber
& \quad + \overline{\phi}(-i A_1 \partial_2 \phi + i A_2 \partial_1 \phi -i \partial_2 A_1 \phi + i \partial_1 A_2 \phi) ] \\
\nonumber
& = 2 \Delta^{-1} \partial_1 Im [\partial_2 \overline{\phi} \partial_1 \phi - \partial_1 \overline{\phi} \partial_2 \phi + i \overline{\phi}(A_2 \partial_1 \phi -A_1 \partial_2 \phi) \\
\nonumber
&  \quad + i \phi(A_2 \partial_1 \overline{\phi} - A_1 \partial_2 \overline{\phi}) + i(\partial_1 A_2 - \partial_2 A_1) |\phi|^2 ] \\
\nonumber
& = 2 \Delta^{-1} \partial_1 Im(\partial_2 \overline{\phi} \partial_1 \phi - \partial_1 \overline{\phi} \partial_2 \phi)  + 2 \Delta^{-1} \partial_1(A_2 \partial_1 |\phi|^2 -A_1 \partial_2 |\phi|^2) \\
\label{***3a}
& \quad + 4 \Delta^{-1} \partial_1 Im(\overline{\phi} \partial_t \phi) |\phi|^2 \, .
\end{align}
Similarly
\begin{align}
\nonumber
 \partial_t A^{cf}_2 & = 2 \Delta^{-1} \partial_2 Im(\partial_1 \overline{\phi} \partial_2 \phi - \partial_2 \overline{\phi} \partial_1 \phi)  + 2 \Delta^{-1} \partial_2(A_1 \partial_2 |\phi|^2 -A_2 \partial_1 |\phi|^2) \\
\label{***3b}
& \quad + 4 \Delta^{-1} \partial_2 Im(\overline{\phi} \partial_t \phi) |\phi|^2 \, .
\end{align}

Moreover from (\ref{**2}) we obtain using $\partial^j A_j^{df} =0$:
\begin{equation}
\label{***1}
\Box \phi = 2i A^{cf} \nabla \phi +2i A^{df} \nabla \phi -i \partial^j A_j^{cf} \phi + (A^{df,j} + A^{cf,j})(A^{df}_j + A^{cf}_j) \phi - \phi V'(|\phi|^2)  .
\end{equation}

We also obtain from (\ref{***2}) and (\ref{**2})
$$ \partial_t A^{df}_2 = 2 \partial_t \Delta^{-1} \partial_1 Im(\overline{\phi} \partial_t \phi) =2 \Delta^{-1} \partial_1 Im(\overline{\phi} \partial_t^2 \phi) = 2\Delta^{-1} \partial_1 Im(\overline{\phi} D^j D_j \phi) \, . $$
Now
\begin{align*}
Im(\overline{\phi} D^j D_j \phi) & = Im(\overline{\phi} \partial^j \partial_j \phi - 2i \overline{\phi} A_j \partial^j \phi - i \overline{\phi} \partial^j A_j \phi) \\
& = Im(\overline{\phi} \partial^j \partial_j \phi - i \overline{\phi} A_j \partial^j \phi -i A_j \phi \partial^j \overline{\phi}- i \overline{\phi} \partial^j A_j \phi) \\
& = \partial^j Im(\overline{\phi} D_j \phi) \, ,
\end{align*}
so that
\begin{equation}
\label{****1}
\partial_t A^{df}_2 = 2\Delta^{-1} \partial_1 \partial^j Im(\overline{\phi} D_j \phi) \, .
\end{equation}
Similarly we obtain
\begin{equation}
\label{****2}
\partial_t A^{df}_1 = - 2\Delta^{-1} \partial_2 \partial^j Im(\overline{\phi} D_j \phi) \, .
\end{equation}
Reversely defining $A:= A^{df} + A^{cf}$ we show that our new system (\ref{***2}),(\ref{***3a}),(\ref{***3b}),(\ref{***1}) implies (\ref{**1}),(\ref{**2}) and also (\ref{1.9}), provided the compatability condition (\ref{1.9init}) is fulfilled. (\ref{**2}) is obvious. (\ref{1.9}) is fulfilled because by use of (\ref{***3a}) and (\ref{***3b})  one easily checks $\partial_1 A^{cf}_2 - \partial_2 A^{cf}_1 = 0$, so that by (\ref{***2})
$$ \partial_t (\partial_1 A_2 - \partial_2 A_1) = \partial_t( \partial_1 A^{df}_2 - \partial_2 A^{df}_1) = \partial_t Im(\overline{\phi} \partial_t \phi) \, . $$
Thus (\ref{1.9}) is fulfilled, if (\ref{1.9init}) holds. Finally we obtain
\begin{align*}
&\partial_t A_1  = \partial_t A_1^{cf} + \partial_t A^{df}_1 \\
& = 2 \Delta^{-1} \partial_1(\partial_2 Im(\overline{\phi} D_1 \phi) - \partial_1 Im(\overline{\phi} D_2 \phi))\hspace{-0.1em} - \hspace{-0.1em} 2 \Delta^{-1} \partial_2(\partial_1 Im(\overline{\phi} D_1 \phi) + \partial_2 Im(\overline{\phi} D_2 \phi)) \\
& = - 2 Im(\overline{\phi} D_2 \phi) \, ,
\end{align*}
where we used (\ref{***3a}) and also (\ref{****2}), which was shown to be a consequence of (\ref{***2}) and (\ref{***1}).
Similarly we also get
$$ \partial _t A_2 = 2 Im(\overline{\phi} D_1 \phi) \, , $$
so that (\ref{**1}) is shown to be satisfied.

Summarizing we have shown that (\ref{**1}),(\ref{**2}),(\ref{1.9}) are equivalent to (\ref{***2}),(\ref{***3a}),(\ref{***3b}),(\ref{***1}) (which also implies (\ref{****1}),(\ref{****2})).

Concerning the initial conditions assume we are given initial data for our system (\ref{**1}),(\ref{**2}),(\ref{1.9}):
$$ A_j(0) = a_j \, , \, \phi(0) = \phi_0 \, , \, (\partial_t \phi)(0) = \phi_1 $$
satisfying $|\nabla|^{\epsilon}a_j \in H^{s+\frac{3}{4}-\epsilon}$ , $\phi_0 \in H^{s+1} $ , $ \phi_1 \in H^{s}$ and (\ref{1.9init}). Then by (\ref{***2}) and (\ref{1.9init}) we obtain
\begin{align*}
A^{df}_1(0) &= -2 \Delta^{-1} \partial_2 Im(\overline{\phi}_0 \phi_1) = - \Delta^{-1} \partial_2(\partial_1 a_2 - \partial_2 a_1)  \\
A^{df}_2(0)& = 2 \Delta^{-1} \partial_1 Im(\overline{\phi}_0 \phi_1) =  \Delta^{-1} \partial_1(\partial_1 a_2 - \partial_2 a_1) 
\end{align*}
and 
$$ A^{cf}_j(0) = a_j - A^{df}_j(0)  ,$$
thus $$|\nabla|^{\epsilon} A^{df}_j(0)\in H^{s+\frac{3}{4}-\epsilon} \, , \, |\nabla|^{\epsilon}A^{cf}_j(0) \in H^{s+\frac{3}{4}-\epsilon}\, .$$
In the sequel we construct a solution of the Cauchy problem for (\ref{***2}),(\ref{***3a}),(\ref{***3b}), (\ref{***1}) with data $\phi_0 \in H^{s+1}$ , $ \phi_1 \in H^{s}$ , $|\nabla|^{\epsilon} A^{cf}_j(0) \in H^{s+\frac{3}{4}-\epsilon}$. We have shown that whenever we have a local solution of this system with data $\phi_0,\phi_1$ and $A_j^{cf}(0) = a_j - A^{df}_j (0)$, where $A^{df}_1(0) = -2 \Delta^{-1} \partial_2 Im(\overline{\phi}_0 \phi_1)$ , 
$A^{df}_2(0) = 2 \Delta^{-1} \partial_1 Im(\overline{\phi}_0 \phi_1)$, we also have that $(\phi,A)$ with $A:= A^{df} + A^{cf}$ is a local solution of (\ref{**1}),(\ref{**2}) with data $(\phi_0,\phi_1,a_1,a_2)$. If (\ref{1.9init}) holds then (\ref{1.9}) is also satisfied.

Defining
$$ \phi_{\pm} = \frac{1}{2}(\phi \pm i^{-1} \langle \nabla \rangle^{-1} \partial_t \phi) \, \Longleftrightarrow \, \phi=\phi_+ +\phi_- \, , \, \partial_t \phi= i \langle \nabla \rangle (\phi_+ - \phi_-) $$
the equation (\ref{***1}) transforms to
\begin{align}
\label{***1'}
(i \partial_t \pm \langle \nabla \rangle) \phi_{\pm} 
 = &\pm 2^{-1} \langle \nabla \rangle^{-1} \big(2i A^{cf} \nabla \phi +2i A^{df} \nabla \phi -i \partial^j A_j^{cf} \phi \\
 \nonumber
 &+ (A^{df,j} + A^{cf,j})(A^{df}_j + A^{cf}_j) \phi - \phi V'(|\phi|^2) + \phi \big)
\end{align}

Fundamental for the proof of our theorem are the following bilinear estimates in wave-Sobolev spaces which were proven by d'Ancona, Foschi and Selberg in the two-dimensional case $n=2$ in \cite{AFS} in a more general form which include many limit cases which we do not need.
\begin{theorem}
\label{Theorem3}
Let $n=2$. The estimate
$$\|uv\|_{X_{|\tau|=|\xi|}^{-s_0,-b_0}} \lesssim \|u\|_{X^{s_1,b_1}_{|\tau|=|\xi|}} \|v\|_{X^{s_2,b_2}_{|\tau|=|\xi|}} $$ 
holds, provided the following conditions hold:
\begin{align*}
\nonumber
& b_0 + b_1 + b_2 > \frac{1}{2} \\
\nonumber
& b_0 + b_1 > 0 \\
\nonumber
& b_0 + b_2 > 0 \\
\nonumber
& b_1 + b_2 > 0 \\
\nonumber
&s_0+s_1+s_2 > \frac{3}{2} -(b_0+b_1+b_2) \\
\nonumber
&s_0+s_1+s_2 > 1 -\min(b_0+b_1,b_0+b_2,b_1+b_2) \\
\nonumber
&s_0+s_1+s_2 > \frac{1}{2} - \min(b_0,b_1,b_2) \\
\nonumber
&s_0+s_1+s_2 > \frac{3}{4} \\
\end{align*}
\begin{align*}
 &(s_0 + b_0) +2s_1 + 2s_2 > 1 \\
\nonumber
&2s_0+(s_1+b_1)+2s_2 > 1 \\
\nonumber
&2s_0+2s_1+(s_2+b_2) > 1 \\
\nonumber
&s_1 + s_2 \ge \max(0,-b_0) \\
\nonumber
&s_0 + s_2 \ge \max(0,-b_1) \\
\nonumber
&s_0 + s_1 \ge \max(0,-b_2)   \, .
\end{align*}
\end{theorem}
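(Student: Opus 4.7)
My plan is to follow the dyadic / Whitney-type analysis of Klainerman-Machedon, Klainerman-Tataru, Foschi-Klainerman and d'Ancona-Foschi-Selberg. By duality and Plancherel, the claimed bilinear bound is equivalent to the trilinear inequality
\[
\Big| \int_{\tau_0+\tau_1+\tau_2 =0,\, \xi_0+\xi_1+\xi_2=0} \widehat{u_0}(\tau_0,\xi_0)\,\widehat{u_1}(\tau_1,\xi_1)\,\widehat{u_2}(\tau_2,\xi_2)\, d\sigma \Big| \lesssim \prod_{i=0}^{2} \|u_i\|_{X^{s_i,b_i}_{|\tau|=|\xi|}}.
\]
Substituting $\widehat{u_i}=F_i\,\langle \xi_i\rangle^{-s_i}\langle |\tau_i|-|\xi_i|\rangle^{-b_i}$ with $F_i\in L^2$, it suffices to prove a weighted $L^2$ bound on the trilinear convolution of the $F_i$.

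I would then decompose each factor into Littlewood-Paley pieces $u_i^{N_i,L_i}$ localized to spatial frequencies $|\xi_i|\sim N_i$ and modulations $\big||\tau_i|-|\xi_i|\big|\sim L_i$. On the convolution support the classical wave-equation identity relates modulations to frequencies: in two space dimensions one obtains $L_{\max}\gtrsim \min(N_{\min},\, N_{\rm med}\,\theta^2)$, where $\theta$ is the angle between the two larger $\xi_i$'s, and this geometric gain drives the bilinear improvement over the trivial H\"older bound. The inputs I would apply to each dyadic block are the $L^\infty_t L^2_x$ energy estimate, the 2D Strichartz bound $\|u\|_{L^4_{t,x}}\lesssim \|u\|_{X^{1/4,\,1/2+}_{|\tau|=|\xi|}}$, and the 2D Klainerman-Machedon bilinear $L^2_{t,x}$ estimate $\|P_{N_1}u\,P_{N_2}v\|_{L^2_{t,x}}\lesssim N_{\min}^{1/2}\,\|P_{N_1}u\|_{X^{0,1/2+}_{|\tau|=|\xi|}}\|P_{N_2}v\|_{X^{0,1/2+}_{|\tau|=|\xi|}}$, together with their transfer-principle variants.

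For each regime determined by the ordering of the $N_i$ and the $L_i$, I would pair the two factors carrying the largest spatial frequencies through an appropriate bilinear $L^2$ or Strichartz estimate, and bound the remaining factor by Cauchy-Schwarz after absorbing its modulation weight. The resulting dyadic sums have the schematic form $\sum N_0^{-s_0}N_1^{-s_1}N_2^{-s_2}L_0^{-b_0}L_1^{-b_1}L_2^{-b_2}\times (\text{geometric gains})$, and each numerical condition listed in the theorem is precisely the requirement that one such sum converges. For instance, $s_0+s_1+s_2>3/4$ arises from the high-high-to-low interaction producing low-frequency output; $(s_0+b_0)+2s_1+2s_2>1$ handles the case where the paired bilinear estimate must be transferred to the non-paired factor via Sobolev embedding; $s_1+s_2\ge \max(0,-b_0)$ captures concentration at the characteristic cone of the output; and so on.

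The main obstacle is the combinatorial bookkeeping: because the statement permits the full range of signs for the $b_i$ and all permutations of the three roles, on the order of a dozen distinct worst-case regimes must be tracked, each producing exactly one of the listed inequalities. For the application intended in this paper only a handful of specific $(s_i,b_i)$ triples arising from the nonlinearities in (\ref{***1'})-(\ref{***3b}) will actually be used, and for those a direct verification would be much shorter; however, we would simply invoke the fully general result of d'Ancona, Foschi and Selberg \cite{AFS} rather than rerun this case analysis.
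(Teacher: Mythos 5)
Your proposal is, in its operative step, identical to the paper: the paper does not prove Theorem \ref{Theorem3} at all, but quotes it as a known result of d'Ancona, Foschi and Selberg \cite{AFS} (indeed as a special case of their more general theorem including limit cases), and your closing sentence does exactly the same. Your sketch of how the proof runs in \cite{AFS} -- duality, dyadic localization in frequency and modulation, geometric gain near the cone of the type quantified in the paper's Lemma 3.1, estimate (\ref{angle}), and a case analysis in which each hypothesis of the theorem is the summability condition for one interaction regime -- is a fair outline. However, two of the auxiliary estimates you quote are misstated for $n=2$ and would break the argument if you actually executed it: the two-dimensional wave Strichartz bound is $\|u\|_{L^4_{t,x}} \lesssim \|u\|_{X^{3/8,\frac{1}{2}+}_{|\tau|=|\xi|}}$, not $X^{1/4,\frac{1}{2}+}$ (the Knapp example forces $s \ge 3/8$; moreover $s=1/4$ would give a bilinear $L^2$ estimate with $s_1+s_2=\frac{1}{2}$, contradicting the sharp condition $s_0+s_1+s_2 > \frac{3}{4}$ appearing in the very theorem you are proving), and for the same reason the dyadic bilinear $L^2$ bound in 2D cannot hold with a bare factor $N_{\min}^{1/2}$ but carries an additional factor of order $N_{\max}^{1/4}$ (taking $N_{\min} \sim N_{\max}$ with only $N_{\min}^{1/2}$ would again yield $s_1+s_2 = \frac{1}{2} < \frac{3}{4}$). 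Since you ultimately defer to \cite{AFS} rather than rerun the case analysis, these slips do not invalidate your proposal as submitted, but they should be corrected if the sketch is ever meant to stand on its own.
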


\section{Proof of Theorem \ref{Theorem}}
Taking the considerations of the previous section into account Theorem \ref{Theorem} reduces to the following proposition and its corollary.
\begin{prop}
Assume $s > \frac{1}{4}$.
The system 
\begin{align}
\label{***1''}
(i \partial_t \pm \langle \nabla \rangle) \phi_{\pm} &
 = \pm 2^{-1} \langle \nabla \rangle^{-1} \big(2i A^{cf} \nabla \phi +2i A^{df} \nabla \phi -i\, \partial^j A_j^{cf} \phi \\
 \nonumber
 & \quad + (A^{df,j} + A^{cf,j})(A^{df}_j + A^{cf}_j) \phi - \phi V'(|\phi|^2) + \phi \big) \\
\label{***2'}
A^{df}_1 &= -2 \Delta^{-1} \partial_2 Im(\overline{\phi} \partial_t \phi) \quad , \quad
 A^{df}_2 = 2 \Delta^{-1} \partial_1 Im(\overline{\phi} \partial_t \phi) \\ 
 \nonumber
\partial_t A^{cf}_1 &= 2 \Delta^{-1} \partial_1 Im(\partial_2 \overline{\phi} \partial_1 \phi - \partial_1 \overline{\phi} \partial_2 \phi)  + 2 \Delta^{-1} \partial_1(A_2 \partial_1 |\phi|^2 -A_1 \partial_2 |\phi|^2) \\
\label{***3a'} 
& \quad + 4 \Delta^{-1} \partial_1 Im(\overline{\phi} \partial_t \phi) |\phi|^2 \, \\
\nonumber
 \partial_t A^{cf}_2 & = 2 \Delta^{-1} \partial_2 Im(\partial_1 \overline{\phi} \partial_2 \phi - \partial_2 \overline{\phi} \partial_1 \phi)  + 2 \Delta^{-1} \partial_2(A_1 \partial_2 |\phi|^2 -A_2 \partial_1 |\phi|^2) \\
\label{***3b'}
& \quad + 4 \Delta^{-1} \partial_2 Im(\overline{\phi} \partial_t \phi) |\phi|^2 \, . 
\end{align}
with data
$ \phi_{\pm}(0) \in H^{s+1} $ and $|\nabla|^{\epsilon}A^{cf}(0) \in H^{s+\frac{3}{4}-\epsilon}$ has a unique local solution $$\phi_{\pm} \in X^{s+1,\frac{1}{2}+}_{\pm}[0,T] \, , \, \nabla A^{cf} \in X^{s-\frac{1}{4},\frac{1}{2}+}_{\tau =0}[0,T] \, , \, |\nabla|^{\epsilon} A^{cf} \in C^0([0,T],L^2) \, .$$ 
Here $\phi=\phi_+ +\phi_- \, , \, \partial_t \phi= i \langle \nabla \rangle (\phi_+ - \phi_-)$ .
Moreover $A^{df}$ satisfies $\nabla A^{df} \in X^{s,\frac{1}{2}+}_{|\tau|=|\xi|}[0,T]$ , $|\nabla|^{\epsilon}A^{df} \in C^0([0,T],L^2)$ and  also  $\nabla A^{cf} \in X^{s-\frac{1}{4},1-}_{\tau =0}[0,T]$ .
\end{prop}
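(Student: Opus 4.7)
The plan is a standard contraction argument on the product space
$$\mathcal{Z}_T = X^{s+1,\frac{1}{2}+}_+[0,T] \times X^{s+1,\frac{1}{2}+}_-[0,T] \times X^{s-\frac{1}{4},\frac{1}{2}+}_{\tau=0}[0,T],$$
whose three factors host $\phi_+$, $\phi_-$, and $\nabla A^{cf}$ respectively. The divergence-free part $A^{df}$ is \emph{not} an independent unknown in the iteration: it is defined from $(\phi_+,\phi_-)$ via (\ref{***2'}) and controlled bilinearly a posteriori. The auxiliary $|\nabla|^{\epsilon}$-weighted $L^2$ bounds on $A^{cf}$ and $A^{df}$, which do not fit naturally into the $X^{s,b}$ scale (their derivatives do, but they themselves need the low-frequency weight to be well defined), are propagated separately by a direct energy-type argument on the identities (\ref{***2'}) and (\ref{***3a'})--(\ref{***3b'}).

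The linear estimates are standard. For the half-wave equations (\ref{***1''}) one has
$$\|\phi_\pm\|_{X^{s+1,\frac{1}{2}+}_\pm[0,T]} \lesssim \|\phi_\pm(0)\|_{H^{s+1}} + T^{0+}\|F_\pm\|_{X^{s+1,-\frac{1}{2}++}_\pm[0,T]},$$
with $F_\pm$ the right-hand side of (\ref{***1''}); analogously, for the first-order-in-$t$ equation with phase $\tau=0$,
$$\|\nabla A^{cf}\|_{X^{s-\frac{1}{4},\frac{1}{2}+}_{\tau=0}[0,T]} \lesssim \|\nabla A^{cf}(0)\|_{H^{s-\frac{1}{4}}} + T^{0+}\|\nabla G\|_{X^{s-\frac{1}{4},-\frac{1}{2}++}_{\tau=0}[0,T]}.$$
A full derivative in $b$ can in fact be gained in the $\tau=0$ space after one time integration of $\partial_t A^{cf} = G$, and this is how one reads off the end-of-statement refinement $\nabla A^{cf}\in X^{s-\frac{1}{4},1-}_{\tau=0}[0,T]$ once the fixed point has been produced.

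The nonlinear work splits into four groups. \emph{(i)} The terms $A^{cf}\nabla\phi$ and $\partial^j A^{cf}_j\phi$ on the right of (\ref{***1''}) couple a $\tau=0$ factor to a $|\tau|=|\xi|$ factor; since $b>\frac{1}{2}$ on the $A^{cf}$ slot, such a product reduces to multiplying $\phi$ by an $H^{s-\frac{1}{4}}_x$-valued function and the bound follows from Theorem \ref{Theorem3}. \emph{(ii)} In the term $A^{df}\nabla\phi$ the identity $A^{df}=-2\Delta^{-1}\nabla^\perp Im(\bar\phi\partial_t\phi)$ exposes the null form $Q_{12}(\psi_1,\psi_2)=\partial_1\psi_1\partial_2\psi_2-\partial_2\psi_1\partial_1\psi_2$ acting on $\phi$ and $\Delta^{-1}Im(\bar\phi\partial_t\phi)$; after expanding $\phi=\phi_++\phi_-$, $\partial_t\phi = i\langle\nabla\rangle(\phi_+-\phi_-)$ the null symbol produces the standard cone-angle gain that is built into Theorem \ref{Theorem3} and is what permits the threshold $s>\frac{1}{4}$. \emph{(iii)} The bilinear source of $A^{df}$ in (\ref{***2'}) and the $Q_{12}$-type source of $A^{cf}$ in (\ref{***3a'})--(\ref{***3b'}) are direct applications of Theorem \ref{Theorem3} with $s_0 = \frac{1}{4}-s$ (one derivative being freed by $\Delta^{-1}\partial_j$) and $s_1=s_2=s+1$; the genuinely cubic and quartic expressions there are handled by peeling off one factor via Sobolev embedding. \emph{(iv)} The potential-type terms $A^j A_j \phi$ and $\phi V'(|\phi|^2)$ in (\ref{***1''}) are controlled by Sobolev embedding, the polynomial growth of $V$, and the algebra properties of $X^{s+1,\frac{1}{2}+}_\pm$ available for $s>\frac{1}{4}$.

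Once each nonlinear term has been bounded by a polynomial in the $\mathcal{Z}_T$-norm of the iterate times a small factor $T^{0+}$, contraction closes on a sufficiently small ball in $\mathcal{Z}_T$; the difference estimates have identical structure and yield uniqueness in the same space. The principal obstacle is group \emph{(ii)}: one must match the null-form gain against the derivative loss created simultaneously by $\Delta^{-1}$ and by the $\frac{1}{4}$-gap between the regularity levels of $\nabla A^{cf}$ and $\nabla\phi$, and then verify every inequality of Theorem \ref{Theorem3} at the endpoint. The condition $s_0+s_1+s_2>\frac{3}{4}$ there is precisely what pins the threshold at $s>\frac{1}{4}$.
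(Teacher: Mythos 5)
Your overall architecture matches the paper's: contraction in $X^{s+1,\frac12+}_{\pm}[0,T]\times X^{s-\frac14,\frac12+}_{\tau=0}[0,T]$ with $A^{df}$ eliminated via (\ref{***2'}) and the $|\nabla|^{\epsilon}L^2$ bounds run separately through $L^1_tL^2_x$ estimates. But there is a genuine gap at the step you yourself flag as principal. You assert that the cone-angle gain from the null symbol is ``built into Theorem \ref{Theorem3}''. It is not: Theorem \ref{Theorem3} is a plain product estimate in wave-Sobolev spaces with no angular weight. To cash in the null structure one needs the separate angle estimate (the paper's Lemma 3.1, from Selberg), $\angle(\pm_1\xi_1,\pm_2\xi_2)\lesssim\bigl(\frac{\langle\tau_1\pm_1|\xi_1|\rangle+\langle\tau_2\pm_2|\xi_2|\rangle+\langle|\tau_3|-|\xi_3|\rangle}{\min(\langle\xi_1\rangle,\langle\xi_2\rangle)}\bigr)^{1/2}$, which converts the angle into modulation weights, followed by a three-case analysis according to which modulation dominates; only then does Theorem \ref{Theorem3} apply, with different exponents in each case. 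Without this step your treatment of the $Q_{12}$-source of $A^{cf}$ fails quantitatively: estimating $\|\partial_i\overline\phi\,\partial_j\phi-\partial_j\overline\phi\,\partial_i\phi\|_{X^{s-\frac14,0}_{\tau=0}}$ as a plain product with $s_0=\frac14-s$, $b_0=0$, $s_1=s_2=s$, $b_1=b_2=\frac12+$ gives $s_0+s_1+s_2=\frac14+s$, and the condition $s_0+s_1+s_2>\frac34$ then forces $s>\frac12$, not $s>\frac14$. Relatedly, your threshold diagnosis is off: in the paper the binding inequalities are $s_0+s_1+s_2>\frac12-b_0=1$ (Claim 1, Case 1, where $s_0+s_1+s_2=\frac34+s$) and $2s_0+s_1+b_1+2s_2=s+\frac34>1$ (Claim 6), both giving $s>\frac14$; with your parameters in group (iii) the $\frac34$-condition is nowhere near saturated. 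Note also that the paper does \emph{not} exploit any null structure in $A^{df}\nabla\phi$: that term is handled crudely (Claim 5) via Sobolev embeddings together with the bilinear bounds $\||\nabla|^{\epsilon}A^{df}\|_{L^\infty_tL^2_x}$ and $\|\nabla A^{df}\|_{X^{s,\frac12+}_{|\tau|=|\xi|}}$ obtained from (\ref{***2'}); your plan to run a trilinear null-form estimate there is a different (undetailed) route, and as justified only by Theorem \ref{Theorem3} it is unsupported.

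A second concrete failure is in your group (i). For the term $\partial^jA^{cf}_j\phi$ you reduce to ``multiplying $\phi$ by an $H^{s-\frac14}_x$-valued function''; the natural implementation is to drop the $\tau$-weight on the $A^{cf}$ factor (using $X^{s-\frac14,0}_{\tau=0}=X^{s-\frac14,0}_{|\tau|=|\xi|}$) and apply Theorem \ref{Theorem3} with $s_0=-s$, $b_0=\frac12-$, $s_1=s-\frac14$, $b_1=0$, $s_2=s+1$, $b_2=\frac12+$. But then $s_0+s_1=-\frac14<0=\max(0,-b_2)$, violating one of the hypotheses, so Theorem \ref{Theorem3} simply does not apply. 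This is why the paper proves Claim 7 by duality, transferring between the $\tau=0$ and $|\tau|=|\xi|$ scales via the weight inequality $\frac{\langle\xi\rangle}{\langle\tau\rangle}\lesssim\langle|\tau|-|\xi|\rangle$ before invoking Theorem \ref{Theorem3} with the shifted exponents $s_0=s+\frac14--$, $b_0=-\frac12++$. Both missing ingredients --- the angle-to-modulation lemma with its case analysis, and the duality/weight-transfer trick for $\partial^jA^{cf}_j\phi$ --- are essential to reach $s>\frac14$, and neither can be absorbed into a bare citation of Theorem \ref{Theorem3}.
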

We obtain immediately
\begin{Cor}
The solution has the property
 $ \, \phi \in C^0([0,T],H^{s+1}) \cap C^1([0,T],H^{s}),$  $|\nabla|^{\epsilon} A^{cf} \in C^0([0,T],H^{s+\frac{3}{4}-\epsilon})$ and $|\nabla|^{\epsilon} A^{df} \in C^0([0,T],H^{s+1-\epsilon}) \, . $
\end{Cor}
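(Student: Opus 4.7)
The plan is to deduce the Corollary directly from the Proposition by invoking the standard transfer principle for $X^{s,b}$-type spaces, combined with a simple high-low frequency decomposition; no new bilinear or nonlinear analysis is required.

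First I would recall that, for $b > \tfrac{1}{2}$, each of the spaces $X^{\sigma,b}_\pm[0,T]$, $X^{\sigma,b}_{|\tau|=|\xi|}[0,T]$ and $X^{\sigma,b}_{\tau=0}[0,T]$ embeds continuously into $C^0([0,T],H^\sigma(\mathbb{R}^2))$ via Sobolev embedding in the time variable. Applied with $\sigma=s+1$ and $b=\tfrac{1}{2}+$, this gives $\phi_\pm \in C^0([0,T],H^{s+1})$, whence $\phi=\phi_++\phi_- \in C^0([0,T],H^{s+1})$ and $\partial_t\phi = i\langle\nabla\rangle(\phi_+-\phi_-) \in C^0([0,T],H^s)$, so that $\phi \in C^1([0,T],H^s)$.

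For the gauge potentials I would use the same embeddings to obtain $\nabla A^{df} \in C^0([0,T],H^s)$ from $\nabla A^{df} \in X^{s,\frac{1}{2}+}_{|\tau|=|\xi|}[0,T]$, and $\nabla A^{cf} \in C^0([0,T],H^{s-\frac{1}{4}})$ from $\nabla A^{cf} \in X^{s-\frac{1}{4},\frac{1}{2}+}_{\tau=0}[0,T]$. These bounds control the high-frequency parts of $A^{df}$ and $A^{cf}$ in $H^{s+1}$ and $H^{s+\frac{3}{4}}$ respectively, but they degenerate near $\xi=0$ since one cannot divide by $|\xi|$ there. The low-frequency region $|\xi|\le 1$ is covered instead by the bounds $|\nabla|^\epsilon A^{df} \in C^0([0,T],L^2)$ and $|\nabla|^\epsilon A^{cf} \in C^0([0,T],L^2)$ already furnished by the Proposition. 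Writing
\begin{equation*}
\bigl\||\nabla|^\epsilon A^{df}\bigr\|_{H^{s+1-\epsilon}}^2 = \int_{\mathbb{R}^2} \langle\xi\rangle^{2(s+1-\epsilon)}|\xi|^{2\epsilon}\bigl|\widehat{A^{df}}(\xi)\bigr|^2\,d\xi
\end{equation*}
and splitting at $|\xi|=1$, one bounds the low-frequency piece by $\||\nabla|^\epsilon A^{df}\|_{L^2}^2$ (using $\langle\xi\rangle\lesssim 1$) and the high-frequency piece by $\|\nabla A^{df}\|_{H^s}^2$ (using $\langle\xi\rangle\sim|\xi|$, whence $\langle\xi\rangle^{s+1-\epsilon}|\xi|^\epsilon \lesssim \langle\xi\rangle^s|\xi|$). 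The identical computation, with the $H^s$ bound on $\nabla A^{df}$ replaced by the $H^{s-\frac{1}{4}}$ bound on $\nabla A^{cf}$, yields $|\nabla|^\epsilon A^{cf} \in C^0([0,T],H^{s+\frac{3}{4}-\epsilon})$.

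I do not expect any real obstacle here: the Corollary is essentially a bookkeeping consequence extracting classical time continuity and spatial regularity out of the $X^{s,b}$-membership already established in the Proposition. The only point requiring any care is the zero-frequency behaviour of $A^{df}$ and $A^{cf}$, which is precisely why the Proposition was formulated to include the separate low-frequency bounds $|\nabla|^\epsilon A^{df}, |\nabla|^\epsilon A^{cf} \in L^2$ as part of its conclusion.
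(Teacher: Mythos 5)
Your proposal is correct and is precisely the argument the paper leaves implicit when it states the Corollary is obtained ``immediately'' from the Proposition: the standard embeddings $X^{\sigma,b}_{\pm}[0,T],\, X^{\sigma,b}_{|\tau|=|\xi|}[0,T],\, X^{\sigma,b}_{\tau=0}[0,T] \hookrightarrow C^0([0,T],H^\sigma)$ for $b>\tfrac12$, together with the low/high frequency splitting at $|\xi|=1$ that matches the $|\nabla|^{\epsilon}\cdot \in C^0([0,T],L^2)$ bounds against the $\nabla A^{df}\in C^0([0,T],H^s)$ and $\nabla A^{cf}\in C^0([0,T],H^{s-\frac14})$ bounds. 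Your observation that the separate low-frequency conclusions in the Proposition exist exactly to handle the zero-frequency degeneracy is also the right reading of why the statement is bookkeeping rather than new estimation.
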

\begin{proof}
We want to apply the contraction mapping principle for $$\phi_{\pm} \in X^{s+1,\frac{1}{2}+}_{\pm}[0,T] \, , \, \nabla A^{cf} \in X^{s-\frac{1}{4},\frac{1}{2}+}_{\tau =0}[0,T] \, , \, |\nabla|^{\epsilon} A^{cf} \in C^0([0,T],L^2) \, .$$ 
By well-known arguments this is reduced to the estimates of the right hand sides of (\ref{***1''}),(\ref{***3a'}) and (\ref{***3b'}) stated as claims 1-9 below. We start to control $\nabla A^{cf}$ in $ X^{s-\frac{1}{4},\frac{1}{2}+}_{\tau =0}$.\\
{\bf Claim 1:} $$ \|\partial_i \overline{\phi} \partial_j \phi - \partial_j \overline{\phi} \partial_i \phi \|_{X^{s-\frac{1}{4},0}_{\tau =0}} \lesssim \|\nabla \phi\|^2_{X^{s,\frac{1}{2}+}_{|\tau| = |\xi|}} \, . $$
Using ($\pm_1$ and $\pm_2$ denote independent signs)
$$\partial_i \overline{\phi} \partial_j \phi - \partial_j \overline{\phi} \partial_i \phi  = \sum_{\pm_1,\pm_2} (\partial_i \overline{\phi}_{\pm_1} \partial_j \phi_{\pm_2} - \partial_j \overline{\phi}_{\pm_1} \partial_i \phi_{\pm_2})$$ 
it suffices to show 
$$ \|\partial_i \overline{\phi} \partial_j \psi - \partial_j \overline{\phi} \partial_i \psi \|_{X^{s-\frac{1}{4},0}_{\tau =0}} \lesssim \|\nabla \phi\|_{X^{s,\frac{1}{2}+}_{\pm_1}} \|\nabla \psi\|_{X^{s,\frac{1}{2}+}_{\pm_2}} \, . $$
We now use the null structure of this term in the form that for vectors $\xi=(\xi^1,\xi^2),$  $\eta=(\eta^1,\eta^2) \in {\mathbb R}^2$ the following estimate holds
$$ |\xi^i \eta^j - \xi^j \eta^i| \le |\xi| |\eta| \angle(\xi,\eta) \, , $$
where $\angle(\xi,\eta)$ denotes the angle between $\xi$ and $\eta$, and the following lemma gives the decisive bound for the angle:
\begin{lemma} (\cite{S}, Lemma 2.1 or \cite{ST}, Lemma 3.2)
\begin{equation}
\label{angle}
\angle(\pm_1 \xi_1,\pm_2 \xi_2) \lesssim \Big(\frac{\langle \tau_1 \pm_1 |\xi_1| \rangle + \langle \tau_2 \pm_2 |\xi_2| \rangle + \langle |\tau_3|-|\xi_3| \rangle}{\min(\langle \xi_1 \rangle,\langle \xi_2 \rangle)}\Big)^{\frac{1}{2}}
\end{equation}
$\forall \, \xi_1,\xi_2,\xi_3 \in{\mathbb R}^2 \, , \, \tau_1,\tau_2,\tau_3 \in {\mathbb R}$ with $\xi_1+\xi_2+\xi_3=0$ and $\tau_1+\tau_2+\tau_3 =0$.
\end{lemma}
\noindent Thus the claimed estimate reduces to 
\begin{align}
\nonumber
\Big| \int_* \frac{\widehat{u}_1(\tau_1,\xi_1)}{\langle \tau_1 \pm_1 |\xi_1|\rangle^{\frac{1}{2}+} \langle \xi_1 \rangle^s} 
\frac{\widehat{u}_2(\tau_2,\xi_2)}{\langle \tau_2 \pm_2 |\xi_2|\rangle^{\frac{1}{2}+} \langle \xi_2 \rangle^s}
\frac{\widehat{u}_3(\tau_3,\xi_3)}{\langle \xi_3 \rangle^{\frac{1}{4}-s}} \angle(\pm_1 \xi_1,\pm_2 \xi_2) \Big| 
 \\
 \label{20}
\lesssim \|u_1\|_{L^2_{xt}}\|u_2\|_{L^2_{xt}}\|u_3\|_{L^2_{xt}} \, ,
\end{align}
where * denotes integration over  $\xi_1+\xi_2+\xi_3=0$ and $\tau_1+\tau_2+\tau_3 =0$. We assume without loss of generality $|\xi_1| \le |\xi_2|$ and the Fourier transforms are nonnnegative. We distinguish three cases according to which of the modules on the right hand side of (\ref{angle}) is dominant. \\
Case 1: $\langle|\tau_3| - |\xi_3|\rangle$ dominant. \\
In this case (\ref{20}) reduces to
\begin{align*}
\Big| \int_* \frac{\widehat{u}_1(\tau_1,\xi_1)}{\langle |\tau_1| - |\xi_1|\rangle^{\frac{1}{2}+} \langle \xi_1 \rangle^{s+\frac{1}{2}}} 
\frac{\widehat{u}_2(\tau_2,\xi_2)}{\langle |\tau_2| - |\xi_2|\rangle^{\frac{1}{2}+} \langle \xi_2 \rangle^s}
\frac{\widehat{u}_3(\tau_3,\xi_3)}{\langle \xi_3 \rangle^{\frac{1}{4}-s}} \langle |\tau_3|-|\xi_3 | \rangle^{\frac{1}{2}} \Big|
 \\
\lesssim \|u_1\|_{L^2_{xt}}\|u_2\|_{L^2_{xt}}\|u_3\|_{L^2_{xt}} \, .
\end{align*}
This follows from Theorem \ref{Theorem3} with $s_0 = \frac{1}{4}-s$ , $b_0 = -\frac{1}{2}$ , $s_1=s+\frac{1}{2}$ , $b_1=b_2=\frac{1}{2}+,$ $s_2 = s$. Its assumptions are satisfied, because $s_0+s_1+s_2=\frac{3}{4}+s > 1=\frac{1}{2}-b_0$ under our assumption $s > \frac{1}{4}$. \\
Case 2: $\langle \tau_1 \pm_1 |\xi_1| \rangle$ dominant. \\
This case is reduced to
\begin{align*}
\Big| \int_* \frac{\widehat{u}_1(\tau_1,\xi_1)}{\langle |\tau_1| - |\xi_1|\rangle^{0+} \langle \xi_1 \rangle^{s+\frac{1}{2}}} 
\frac{\widehat{u}_2(\tau_2,\xi_2)}{\langle |\tau_2| - |\xi_2|\rangle^{\frac{1}{2}+} \langle \xi_2 \rangle^s}
\frac{\widehat{u}_3(\tau_3,\xi_3)}{\langle \xi_3 \rangle^{\frac{1}{4}-s}} \Big|
 \\
\lesssim \|u_1\|_{L^2_{xt}}\|u_2\|_{L^2_{xt}}\|u_3\|_{L^2_{xt}} \, ,
\end{align*}
which follows from Theorem \ref{Theorem3} with $s_0 = \frac{1}{4}-s$ , $b_0 = 0$ , $s_1=s+\frac{1}{2}$ , $b_1= 0+,$  $b_2=\frac{1}{2}+$ , $s_2 = s$.\\
Case 3: $\langle \tau_2 \pm_2 |\xi_2| \rangle$ dominant. \\
This case is reduced to
\begin{align*}
\Big| \int_* \frac{\widehat{u}_1(\tau_1,\xi_1)}{\langle |\tau_1| - |\xi_1|\rangle^{\frac{1}{2}+} \langle \xi_1 \rangle^{s+\frac{1}{2}}} 
\frac{\widehat{u}_2(\tau_2,\xi_2)}{\langle |\tau_2| - |\xi_2|\rangle^{0+} \langle \xi_2 \rangle^s}
\frac{\widehat{u}_3(\tau_3,\xi_3)}{\langle \xi_3 \rangle^{\frac{1}{4}-s}} \Big|
 \\
\lesssim \|u_1\|_{L^2_{xt}}\|u_2\|_{L^2_{xt}}\|u_3\|_{L^2_{xt}} \, ,
\end{align*}
which follows from Theorem \ref{Theorem3} with $s_0 = \frac{1}{4}-s$ , $b_0 = 0$ , $s_1=s+\frac{1}{2}$ , $b_1= \frac{1}{2}+,$  $b_2=0+$ , $s_2 = s$.

The cubic terms are easier to handle, because they contain one derivative less.\\
{\bf Claim 2:}
 \begin{align}
 \label{10}
&\|A_i \partial_j (|\phi|^2)\|_{X^{s-\frac{1}{4},0}_{\tau=0}}  \lesssim (\| \nabla A_i\|_{X^{s-\frac{1}{4},0}_{|\tau|=|\xi|}} +  \||\nabla|^{\epsilon}A_i\|_{L^{\infty}_t(L^2_x)}) \|\phi \|^2_{X^{s+1,\frac{1}{2}+}_{|\tau|=|\xi|}} \\
\label{11}
&\lesssim (\|\phi\|_{X^{s+1,\frac{1}{2}+}_{|\tau|=|\xi|}} + \|\partial_t \phi\|_{X^{s,\frac{1}{2}+}_{|\tau|=|\xi|}} \| \nabla A_i^{cf}\|_{X^{s-\frac{1}{4},0}_{\tau=0}} + \||\nabla|^{\epsilon}A_i^{cf}\|_{L^{\infty}_t(L^2_x)}) \|\phi \|^2_{X^{s+1,\frac{1}{2}+}_{|\tau|=|\xi|}} \, .
\end{align}
(\ref{10}) is proven by Sobolev's embedding theorem, especially $\dot{H}^{1-} \cap \dot{H}^{1+} \subset L^{\infty}$, and by splitting $A_i = A^l_i + A^h_i$ into low and high frequency parts, i.e., $supp \,\widehat{A}_i^l \subset \{ |\xi| \le 2 \}$ and $ supp \,\widehat{A}_i^h \subset \{|\xi| \ge 1\}$ . The low frequency part is easily taken care of as follows
\begin{align*}
\|A_i^l \partial_j (|\phi|^2)\|_{L^2_t H^{s-\frac{1}{4}}_x} 
  \lesssim \|A_j^l\|_{L^{\infty}_t H^{s,\infty}_x} \|\phi\|^2_{L^4_t H^{s+1}_x} \lesssim \||\nabla|^{\epsilon}A_j\|_{L^{\infty}_t L^2_x} \|\phi\|^2_{X^{s+1,\frac{1}{2}+}_{|\tau|=|\xi|}} \, .
 \end{align*}
For the high frequency part we obtain
\begin{align*}
&\|A_i^h \partial_j (|\phi|^2)\|_{L^2_t H^{s-\frac{1}{4}}_x} \\
& \lesssim \| \langle \nabla \rangle^{s-\frac{1}{4}} A_i^h\|_{L^2_t L^{\infty-}_x} \|\partial_j(|\phi|^2)\|_{L^{\infty}_t L^{2+}_x} + \|A_i^h\|_{L^2_t L^{\infty}_x} \| \langle \nabla \rangle^{s-\frac{1}{4}} \partial_j(|\phi|^2)\|_{L^{\infty}_t L^2_x} \\
& \lesssim \| \langle \nabla \rangle^{s-\frac{1}{4}} A_i^h\|_{L^2_t L^{\infty -}_x} \|\phi\|^2_{L^{\infty}_t H^{1+}_x} + \|A_i^h\|_{L^2_t H^{1+}_x} \||\phi|^2\|_{L^{\infty}_t H^{s+1}_x} \\
& \lesssim \| \nabla A_i^h\|_{X^{s-\frac{1}{4},0}_{|\tau|=|\xi|}}  \|\phi\|^2_{X^{s+1,\frac{1}{2}+}_{|\tau|=|\xi|}} \, .
\end{align*}
In order to obtain (\ref{11}) from these estimates it remains to estimate $A^{df}$.
We obtain by (\ref{***2'}) and Sobolev's embedding $\dot{H}^{1-\epsilon,\frac{2}{2-\epsilon}} \subset L^2$:
\begin{align}
\label{f}
\||\nabla|^{\epsilon}A_i^{df}\|_{L^{\infty}_t(L^2_x)} & \lesssim \| \phi \partial_t \phi \|_{L^{\infty}_t \dot{H}^{-1+\epsilon}_x} \lesssim \| \phi \partial_t \phi \|_{L^{\infty}_t L^{\frac{2}{2-\epsilon}}_x} \\
\nonumber
& \lesssim \|\phi\|_{L^{\infty}_t L^{\frac{4}{2-\epsilon}}_x} \| \partial_t \phi \|_{L^{\infty}_t L^{\frac{4}{2-\epsilon}}_x} \lesssim \|\phi\|_{X^{s+1,\frac{1}{2}+}_{|\tau|=|\xi|}} \|\partial_t \phi\|_{X^{s,\frac{1}{2}+}_{|\tau|=|\xi|}} \, .
\end{align}
Moreover for sufficiently small $\epsilon >0$ we obtain
\begin{equation}
\label{g}
\| \nabla A_i^{df}\|_{X^{s,\frac{1}{2}+\frac{\epsilon}{2}}_{|\tau|=|\xi|}} \lesssim \|\phi \partial_t \phi \|_{X^{s,\frac{1}{2}+\frac{\epsilon}{2}}_{|\tau|=|\xi|}} \lesssim \|\phi\|_{X^{s+1,\frac{1}{2}+\epsilon}_{|\tau|=|\xi|}} \|\partial_t \phi\|_{X^{s,\frac{1}{2}+\epsilon}_{|\tau|=|\xi|}}
\end{equation} 
by Theorem \ref{Theorem3} with $s_0= -s$ , $b_0 = -\frac{1}{2}-\frac{\epsilon}{2}$ , $s_1=s+1$ , $s_2 = s$ , $ b_1=b_2= \frac{1}{2}+\epsilon.$  This is more than we need here. \\
{\bf Claim 3:} 
 $$\|Im{(\overline{\phi} \partial_t \phi) |\phi|^2 \|_{X^{s,0}_{\tau =0}}     \lesssim \|\phi\|^3_{X^{s+1,\frac{1}{2}+}_{|\tau|=|\xi|}} \|\partial_t \phi\|_{X^{s,\frac{1}{2}+}_{|\tau|=|\xi|}}} \, . $$
This follows from
\begin{align*}
&\|Im(\overline{\phi} \partial_t \phi) |\phi|^2\|_{L^2_t H^{s}_x} \\
& \lesssim \|\phi^3\|_{L^{\infty}_t L^{\infty}_x} \|\partial_t \phi\|_{L^2_t H^s_x}  +  \|\phi^3\|_{L^{\infty}_t H^{s,\infty -}_x} \|\partial_t \phi\|_{L^2_t L^{2+}_x} \\
&\lesssim \|\phi\|^3_{L^{\infty}_t H^{s+1}_x}  \|\partial_t \phi\|_{L^2_t H^s_x} +  \|\phi\|^3_{L^{\infty}_t H^{s,\infty -}_x} \|\partial_t \phi\|_{L^2_t H^{0+}_x} \\ 
&\lesssim \|\phi\|^3_{X^{s+1,\frac{1}{2}+}_{|\tau|=|\xi|}}  \|\partial_t \phi\|_{X^{s,\frac{1}{2}+}_{|\tau|=|\xi|}}
\, .
\end{align*}
In order to control $\|A^{cf}\|_{L^{\infty}_t([0,T],L^2_x)}$ in the fixed point argument we have to estimate the $L^1_x([0,T],L^2_x)$ - norm of the right hand side in (\ref{***3a'}),(\ref{***3b'}). \\
{\bf Claim 4 (a)} 
$$ \int_0^T \|\nabla \phi \nabla \phi\|_{\dot{H}^{-1+\epsilon}} dt \lesssim \int_0^T \|\nabla \phi \nabla \phi\|_{L^\frac{2}{2-\epsilon}} dt \lesssim T \|\nabla \phi\|^2_{L^{\infty}_t L^\frac{4}{2-\epsilon}_x} \lesssim T \|\nabla \phi\|^2_{X^{s+1,\frac{1}{2}+}_{|\tau|=|\xi|}} $$
{\bf(b)} 
\begin{align*}
&\int_0^T \hspace{-0.2em}\| A^{df} \nabla(|\phi|^2)\|_{\dot{H}^{-1+\epsilon}_x} dt  \lesssim \int_0^T \hspace{-0.2em}\| A^{df} \nabla(|\phi|^2)\|_{L^{\frac{2}{2-\epsilon}}_x} dt \\
&\lesssim \int_0^T \hspace{-0.2em}\| \Delta^{-1} \nabla(\phi \partial_t \phi) \nabla(|\phi|^2)\|_{L^\frac{2}{2-\epsilon}_x} dt 
 \lesssim \int_0^T \| \Delta^{-1} \nabla(\phi \partial_t \phi)\|_{L^{\frac{4}{2-\epsilon}}_x} \|\nabla(|\phi|^2)\|_{L^{\frac{4}{2-\epsilon}}_x} dt \\
 &\lesssim \int_0^T \|\phi\|_{L^{\frac{8}{4-\epsilon}}_x} \|\partial_t \phi\|_{L^{\frac{8}{4-\epsilon}}_x} \|\phi\|_{L^{\infty}_x} \|\nabla \phi\|_{L^{\frac{4}{2-\epsilon}}_x} dt 
 \lesssim T \|\phi\|^3_{X^{s+1,\frac{1}{2}+}_{|\tau|=|\xi|}}  \|\partial_t \phi\|_{X^{s,\frac{1}{2}+}_{|\tau|=|\xi|}}
\end{align*}
{\bf (c)} 
\begin{align*}
& \int_0^T \| A^{cf} \nabla(|\phi|^2)\|_{\dot{H}^{-1+\epsilon}_x} dt \lesssim \int_0^T \|A^{cf} \nabla(|\phi|^2)\|_{L^{\frac{2}{2-\epsilon}}_x} dt \\
&\lesssim T \|A^{cf}\|_{L^{\infty}_t L^{\frac{2}{1-\epsilon}}_x} \|\phi\|_{L^{\infty}_t L^{\infty}_x} \|\nabla \phi\|_{L^{\infty}_t L^2_x} 
\lesssim T \|\phi\|^2_{X^{s+1,\frac{1}{2}+}_{|\tau|=|\xi|}} \||\nabla|^{\epsilon}A^{cf}\|_{L^{\infty}_t L^2_x}
\end{align*}
{\bf (d)} 
\begin{align*}
&\int_0^T \|\phi (\partial_t \phi) |\phi|^2\|_{\dot{H}^{-1+\epsilon}_x} dt \lesssim \int_0^T \|\phi (\partial_t \phi) |\phi|^2\|_{L^\frac{2}{2-\epsilon}} dt \\
& \lesssim T \|\phi\|^2_{L^{\infty}_x L^{\infty}_x} \|\phi\|_{L^{\infty}_t L^{\frac{4}{2-\epsilon}}_x} \|\partial_t \phi\|_{L^{\infty}_t L^{\frac{4}{2-\epsilon}}_x} \lesssim T \|\phi\|^3_{X^{s+1,\frac{1}{2}+}_{|\tau|=|\xi|}} \|\partial_t \phi\|_{X^{s,\frac{1}{2}+}_{|\tau|=|\xi|}} \, .
\end{align*}
Next in order to estimate $\|\phi\|_{X^{s+1,\frac {1}{2}+}_{|\tau|=|\xi|}}$ and $\|\partial_t \phi\|_{X^{s,\frac {1}{2}+}_{|\tau|=|\xi|}}$ we have to control the right hand side of (\ref{***1'}).\\
{\bf Claim 5:}
$$ \|A^{df} \nabla \phi\|_{X^{s,0}_{|\tau|=|\xi|}} \lesssim
\|\phi\|^2_{X^{s+1,\frac{1}{2}+}_{|\tau|=|\xi|}} \|\partial_t \phi\|_{X^{s,\frac{1}{2}+}_{|\tau|=|\xi|}}  \, . $$
We obtain by Sobolev:
\begin{align*}
\|A^{df} \nabla \phi \|_{X^{s,0}_{|\tau|=|\xi|}} &\lesssim \|A^{df}\|_{L^2_t H^{s,\infty-}_x} \|\nabla \phi \|_{L^{\infty}_t L^{2+}_x} + \|A^{df}\|_{L^2_t L^{\infty}_x} \|\nabla \phi \|_{L^{\infty}_t H^s_x} \\
& \lesssim (\||\nabla|^{\epsilon}A^{df}\|_{L^2_t L^2_x} + \|\nabla A^{df}\|_{L_t^2 H^s_x}) \|\nabla \phi\|_{L^{\infty}_t H^s_x} \, .
\end{align*}
Using (\ref{f}) and (\ref{g}) we obtain the claimed estimate. \\
{\bf Claim 6:}
$$ \|A^{cf} \nabla \phi\|_{X^{s,-\frac{1}{2}+}_{|\tau|=|\xi|}} \lesssim (\|\nabla A^{cf}_h\|_{X^{s-\frac{1}{4},0}_{\tau =0}} + \||\nabla|^{\epsilon} A^{cf}_l\|_{L^{\infty}_t L^2_x}) \|\nabla \phi\|_{X^{s,\frac{1}{2}+}_{|\tau|=|\xi|}} \, , $$
where we split $A^{cf}$ into its low and high frequency parts $A^{cf}_l$ and $A^{cf}_h$. The low frequency part is estimated as follows:
$$ \|A^{cf}_l \nabla \phi\|_{L^2_t H^s_x} \lesssim \|A^{cf}_l \|_{L^{\infty}_t H^{s,\infty}_x} \|\nabla \phi\|_{L^2_t H^s_x} \lesssim \||\nabla|^{\epsilon}A^{cf}\|_{L^{\infty}_t L^2_x} \|\nabla \phi\|_{X^{s,\frac{1}{2}+}_{|\tau|=|\xi|}} \, , $$
whereas for the high frequency part we use the trivial identity $X^{s-\frac{1}{4},0}_{\tau =0} = X^{s-\frac{1}{4},0}_{|\tau| =|\xi|}$ and
Theorem \ref{Theorem3} with $s_0=-s$ , $b_0=\frac{1}{2}-$ , $s_1=s+\frac{3}{4}$ , $b_1=0$ , $s_2=s$ , $b_2=\frac{1}{2}+,$  which requires $2s_0+s_1+b_1+2s_2= s+\frac{3}{4} > 1$, thus our assumption $s> \frac{1}{4}$ . \\
{\bf Claim 7:}
$$ \|\nabla A^{cf} \phi\|_{X^{s,-\frac{1}{2}+}_{|\tau|=|\xi|}} \lesssim \| \nabla A^{cf}\|_{X^{s-\frac{1}{4},\frac{1}{2}+}_{\tau =0}} \|\phi\|_{X^{s+1,\frac{1}{2}+}_{|\tau|=|\xi|}} \, . $$
By duality this is equivalent to
$$ \| w \phi \|_{X^{\frac{1}{4}-s,-\frac{1}{2}-}_{\tau =0}} \lesssim \|w\|_{X^{-s,\frac{1}{2}-}_{|\tau|=|\xi|}} \|\phi\|_{X^{s+1,\frac{1}{2}+}_{|\tau|=|\xi|}} \, . $$
We use the estimate $ \frac{\langle \xi \rangle}{\langle \tau \rangle}  \lesssim \langle |\xi| - |\tau| \rangle$ and obtain
$$ \| w \phi \|_{X^{\frac{1}{4}-s,-\frac{1}{2}-}_{\tau =0}} \lesssim \|w \phi\|_{X^{-s-\frac{1}{4} ++,\frac{1}{2}--}_{|\tau|=|\xi|}} \lesssim \|w\|_{X^{-s,\frac{1}{2}-}_{|\tau|=|\xi|}} \|\phi\|_{X^{s+1,\frac{1}{2}+}_{|\tau|=|\xi|}} \, , $$
where the last estimate follows from Theorem \ref{Theorem3} with $s_0 = s+\frac{1}{4} --$ , $b_0 = -\frac{1}{2}++,$ $s_1 = s+1$ , $s_2=-s $ , $b_1=\frac{1}{2}+$ , $b_2=\frac{1}{2}-$ . \\
{\bf Claim 8:} 
$$\|A^{cf} A^{cf} \phi \|_{X^{s,0}_{|\tau|=|\xi|}} \lesssim (\|\nabla A^{cf}\|^2_{X^{s-\frac{1}{2}-,\frac{1}{2}+}_{\tau =0}} + \||\nabla|^{\epsilon}A^{cf}\|^2_{L^{\infty}_t L^2_x}) \|\phi\|_{X^{s+1,\frac{1}{2}+}_{|\tau|=|\xi|}} \, . $$
Splitting $A^{cf}= A^{cf}_h + A^{cf}_l$ we first consider 
\begin{align*}
&\|A^{cf}_h A^{cf}_h \phi\|_{X^{s,0}_{|\tau|=|\xi|}} \lesssim
 \| \langle \nabla \rangle^s A^{cf}_h A^{cf}_h \phi\|_{L^2_t L^2_x} + \| A^{cf}_h A^{cf}_h \langle \nabla \rangle^s \phi\|_{L^2_t L^2_x}  \\
& \lesssim  \| \langle \nabla \rangle^s A^{cf}_h\|_{L^4_t L^{4-}_x} \| A^{cf}_h\|_{L^4_t L^{4+}_x} \| \phi\|_{L^{\infty}_t L^{\infty}_x} + \| A^{cf}_h \|^2_{L^4_t L^{4+}_x} \| \langle \nabla \rangle^s \phi\|_{L^{\infty}_t L^{\infty -}_x}  \\ 
& \lesssim  \| \langle \nabla \rangle^s A^{cf}_h\|_{L^4_t H^{\frac{1}{2}-}_x} \| A^{cf}_h\|_{L^4_t L^{4+}_x} \| \phi\|_{L^{\infty}_t L^{\infty}_x} + \| A^{cf}_h \|^2_{L^4_t H^{\frac{1}{2}+}_x} \| \langle \nabla \rangle^s \phi\|_{L^{\infty}_t L^{\infty -}_x}  \\ 
&  \lesssim \|\nabla A^{cf}_h\|_{X^{s-\frac{1}{2}-,\frac{1}{2}+}_{\tau =0}}^2  \|\phi\|_{X^{s+1,\frac{1}{2}+}_{|\tau|=|\xi|}} \, .
\end{align*}
Next we consider
$$
\|A^{cf}_l A^{cf}_l \phi\|_{X^{s,0}_{|\tau|=|\xi|}}  \lesssim \|A^{cf}_l\|_{L^{\infty}_t H^{s,\infty}_x}^2 \|\phi\|_{L^2_t H^s_x} 
\lesssim \||\nabla|^{\epsilon}A^{cf}\|_{L^{\infty}_t L^2_x}^2  \|\phi\|_{X^{s,\frac{1}{2}+}_{|\tau|=|\xi|}}  $$
and also
\begin{align*}
\|A^{cf}_l A^{cf}_h \phi\|_{X^{s,0}_{|\tau|=|\xi|}} & \lesssim \|A^{cf}_l\|_{L^{\infty}_t H^{s,\infty}_x} \|A^{cf}_h\|_{L^4_t H^{s,2+}_x} \|\phi\|_{L^4_t H^{s,\infty-}_x} \\ &\lesssim \||\nabla|^{\epsilon}A^{cf}_l\|_{L^{\infty}_t L^2_x} \| \nabla A^{cf}_h \|_{X^{s-\frac{1}{2}-,\frac{1}{2}+}_{|\tau|=|\xi|}} \|\phi\|_{X^{s+1,\frac{1}{2}+}_{|\tau|=|\xi|}} \, ,
\end{align*}
which completes the proof of claim 8.

If one combines similar estimates with (\ref{f}) and (\ref{g}) we also obtain the required bounds for $\|A^{df} A^{df} \phi\|_{X^{s,0}_{|\tau|=|\xi|}}$ and $\|A^{df} A^{cf} \phi\|_{X^{s,0}_{|\tau|=|\xi|}}$.\\
{\bf Claim 9:} For a suitable $N \in {\mathbb N}$ the following estimate holds:
$$\|\phi V'(|\phi|^2)\|_{X^{s,0}_{|\tau|=|\xi|}} \lesssim \|\phi\|_{X^{s+1,\frac{1}{2}+}_{|\tau|=|\xi|}} (1 + \|\phi\|_{X^{s+1,\frac{1}{2}+}_{|\tau|=|\xi|}}^N) \, .$$
Using the polynomial bounds of all derivatives of $V$ we crudely estimate using that $H^{s+1}$ is a Banach algebra:
$$ \|\phi V'(|\phi|^2)\|_{L^2_t H^s_x} \lesssim \|\phi\|_{L^2_t H^{s+1}_x}(1+\|\phi\|^N_{L^{\infty}_t H^{s+1}_x}) \lesssim \|\phi\|_{X^{s+1,\frac{1}{2}+}_{|\tau|=|\xi|}} (1 + \|\phi\|_{X^{s+1,\frac{1}{2}+}_{|\tau|=|\xi|}}^N) \, .$$

Now the contraction mapping principle applies. The claimed properties of $A^{df}$ follow immediately from (\ref{f}) and (\ref{g}), and the property $\nabla A^{cf} \in X^{s-\frac{1}{4},1-}_{\tau =0}[0,T]$ from claims 1-3. The proof of Theorem \ref{Theorem} is complete.
\end{proof}

\end{document}